\theoremstyle{plain}
\newtheorem{dummy}{anything}[section]
\newtheorem{theorem}[dummy]{Theorem}
\newtheorem{lemma}[dummy]{Lemma}
\newtheorem{proposition}[dummy]{Proposition}
\newtheorem{corollary}[dummy]{Corollary}
\theoremstyle{definition}
\newtheorem{remark}[dummy]{Remark}
\begin{document}

\title{Fillings of unit cotangent bundles of nonorientable surfaces}

\author{Youlin Li and Burak Ozbagci}

\address{School of Mathematical Sciences, Shanghai Jiao Tong University, Shanghai 200240, China}
\email{liyoulin@sjtu.edu.cn}
\address{Department of Mathematics, UCLA, Los Angeles, CA 90095 and 
Department of Mathematics, Ko\c{c} University, 34450,  Istanbul,
Turkey}
\email{bozbagci@ku.edu.tr}

\subjclass[2000]{}

\thanks{The first author is partially supported by grant no. 11471212 of the National Natural Science Foundation of China.}


\begin{abstract}

We prove that any minimal weak symplectic filling of the canonical contact structure on the unit cotangent bundle of a nonorientable closed surface other than the  real projective plane is s-cobordant rel boundary to the disk cotangent bundle of the surface.  If the nonorientable surface is the Klein bottle, then we show 
that the minimal weak symplectic filling is unique up to homeomorphism.   
\end{abstract}

\maketitle

\section{Introduction}\label{sec: intro}

  Let $M$ denote a closed surface which is not assumed to be orientable. The bundle of cooriented 
lines tangent to $M$ has a projection onto $M$, which we denote by $\pi$. For a  point $q$ in $M$ and 
a cooriented line $u$ in $T_q M$,
we denote by
$\xi_{(q,u)}$ the cooriented
  plane  described uniquely by the
  equation $\pi_*(\xi_{(q,u)}) = u \in T_q M$. The canonical contact structure $\xi_{can} $ on the bundle of cooriented lines
  tangent to $M$ consists of these planes (see, for example, \cite{ma}).

    The bundle of cooriented lines
  tangent to $M$ can be identified with the unit cotangent
        bundle $ST^*M$, once  $M$ is equipped with a Riemannian
        metric. Under this identification, the contact structure $\xi_{can}$ is given by the kernel of
        the Liouville $1$-form $\lambda_{can}$.  It follows that the contact $3$-manifold $(ST^*M, \xi_{can}) $ is Stein fillable, with one filling given by the disk
        cotangent bundle $DT^*M$.

       Let  $\Sigma_g$ denote the closed orientable surface of genus $g \geq 0$.  The  unit cotangent bundle $ST^*\Sigma_0$ is diffeomorphic to the real
        projective space $\mathbb{RP}^3$, and $\xi_{can}$ is the  unique tight contact
        structure in $\mathbb{RP}^3$, up to isotopy (cf. \cite{h}). McDuff
        \cite{mc} showed that any minimal symplectic filling of
        $(\mathbb{RP}^3, \xi_{can})$ is diffeomorphic to $DT^*\Sigma_0$.  

The unit cotangent bundle $ST^*\Sigma_1$ is diffeomorphic to the
        $3$-torus $T^3$ and Eliashberg \cite{el} showed that $\xi_{can}$
        is the  unique strongly symplectically  fillable contact structure in $T^3$, up to contactomorphism. 
        Moreover, Stipsicz {\cite{s} proved that any Stein filling of $(T^3, \xi_{can})$  is homeomorphic to the disk cotangent bundle $DT^*\Sigma_1 \cong T^2 \times D^2$. This result was improved by  Wendl \cite{we}, who showed that, in fact,  any minimal strong symplectic
        filling of $(T^3, \xi_{can})$ is symplectic deformation equivalent  to 
          $DT^*\Sigma_1$ equipped with its canonical symplectic structure. 

Recently, Sivek and Van Horn-Morris \cite{sm} proved that, for $g \geq 2$, any Stein filling 
of the contact $3$-manifold $(ST^*\Sigma_g, \xi_{can})$  is  s-cobordant rel boundary to the disk
cotangent bundle $DT^*\Sigma_g$. 

Moreover, Li, Mak and Yasui proved that, for $g \geq 2$, $(ST^*\Sigma_g, \xi_{can})$ admits minimal strong symplectic fillings with arbitrarily large $b^+_2$ (see \cite[Proof of Corollary 1.6]{lmy})) despite the fact that 
any exact filling of $(ST^*\Sigma_g, \xi_{can})$ has the same integral homology and intersection form as  $DT^*\Sigma_g$ \cite[Theorem 1.4]{lmy}. 

In this paper, we study the topology of the symplectic fillings of the canonical contact structure $\xi_{can}$ on the unit cotangent bundle of any \emph{nonorientable} closed  surface. A significant feature in the nonorientable surface case is that $\xi_{can}$ on the unit cotangent bundle is supported by a planar open book decomposition \cite{oo}. Therefore, the topology of the symplectic fillings is greatly restricted by the results in \cite{et}, \cite{nw}, \cite{wa}, and \cite{we}. Most importantly, according to a theorem of  
Niederkr\"{u}ger and Wendl \cite{nw},  any weak symplectic filling of $\xi_{can}$ is symplectic deformation equivalent to a blow up of one of its Stein fillings.

Here we show that the canonical contact structure on the unit cotangent bundle of any nonorientable closed surface other than the real projective plane admits a unique minimal weak symplectic filling, up to s-cobordism rel boundary. More precisely, we prove the following.

\begin{theorem} \label{thm: main} Let $N_k$ to denote the nonorientable closed surface
       obtained by the connected sum of $k \geq 1$ copies of the real
       projective plane $\mathbb{RP}^2$. Then, for $k \geq 2$, any minimal weak symplectic filling of the canonical contact structure $\xi_{can}$ on the unit cotangent bundle 
$ST^*N_k$ is s-cobordant rel boundary to the disk cotangent bundle $DT^*N_k$.  
\end{theorem}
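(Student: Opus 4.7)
The strategy would be to exploit the fact that $\xi_{can}$ on $ST^*N_k$ is supported by a \emph{planar} open book \cite{oo}, which is the key distinction from the higher-genus orientable case and which permits a reduction from weak to Stein fillings. Given a minimal weak symplectic filling $W$ of $(ST^*N_k, \xi_{can})$, the Niederkr\"{u}ger--Wendl theorem \cite{nw} implies that $W$ is symplectic deformation equivalent to a blow-up of a Stein filling; minimality of $W$ forces this blow-up to be trivial, so $W$ itself is diffeomorphic to a Stein filling of $(ST^*N_k, \xi_{can})$.

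By Wendl's theorem \cite{we}, every Stein filling of a planar contact 3-manifold admits an allowable Lefschetz fibration $W \to D^2$ whose regular fiber is the planar page $P$ of the supporting open book, with vanishing cycles corresponding to a positive factorization of the monodromy $\phi \in \mathrm{MCG}(P)$. Using the explicit open book on $ST^*N_k$ from \cite{oo}, this yields a handle decomposition of $W$ consisting of one $0$-handle, one $1$-handle per extra boundary component of $P$, and one $2$-handle per positive Dehn twist in the factorization. From this one can read off $\pi_1(W)$, $H_*(W;\Z)$, and the intersection form of $W$ directly from any positive factorization of $\phi$.

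The final step is to show that every positive factorization of $\phi$ produces a Stein filling with the same fundamental group, integral homology, and intersection form as $DT^*N_k$, and then to upgrade the resulting homotopy equivalence rel boundary to an s-cobordism. The s-cobordism theorem, combined with the vanishing of $\mathrm{Wh}(\pi_1(N_k))$ (known for surface groups), would complete the argument. I expect the main obstacle to be the analysis of positive factorizations of $\phi$ in the planar mapping class group: identifying $\pi_1(W)$ with the nonorientable surface group $\pi_1(N_k)$, rather than a proper quotient thereof, requires a careful verification that, for every admissible factorization, the $2$-handles impose precisely the correct relations among the $1$-handle generators coming from the boundary components of $P$. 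A parallel check will be needed for the equivariant intersection form over $\Z[\pi_1(N_k)]$ if one wishes to conclude homotopy equivalence rel boundary rather than merely equality of classical invariants.
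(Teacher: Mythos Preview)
Your reduction to Stein fillings via Niederkr\"{u}ger--Wendl and the plan to extract homological invariants from positive factorizations of the planar monodromy match the paper, and indeed the paper shows (Lemma~\ref{lem: monocurve}) that for $k \geq 3$ every positive factorization of $\phi_k$ consists of Dehn twists along curves enclosing the \emph{same holes} as the standard $V_0,\ldots,V_{k+1}$, which pins down $H_*(W;\Z)$. The gap is in your proposed computation of $\pi_1(W)$: the factorization analysis determines only which holes each vanishing cycle encloses --- equivalently, the homology class of each $V'_j$ on the page --- but \emph{not} its isotopy class. Since the relations the $2$-handles impose on $\pi_1$ depend on the free homotopy classes of the attaching circles, knowing only the enclosed holes does not let you conclude $\pi_1(W)\cong\pi_1(N_k)$ rather than some other group with the same abelianization. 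The paper does not attempt this direct computation, and it is not clear it can be carried out.

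What your outline is missing entirely is \emph{asphericity}, which is the engine of the paper's argument. One builds a surjection $\pi_1(W)\twoheadrightarrow\Z_2$ killing the fiber class; the resulting double cover $W'$ is a Stein filling of $(ST^*\Sigma_{k-1},\xi_{can})$, hence aspherical by the orientable case in \cite{sm}, so $W$ is aspherical too. This gives $H_2(\pi_1(W);\Z)=H_2(W;\Z)=0$, and a Lyndon--Hochschild--Serre spectral sequence applied to the extension $1\to\langle\tau\rangle\to\pi_1(W)\to\pi_1(N_k)\to 1$ then forces the fiber class $\tau$ to die, yielding $\pi_1(W)\cong\pi_1(N_k)$. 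Asphericity is used again at the end: it makes $W$ topologically s-rigid by Khan's theorem \cite{k} (so a homotopy equivalence restricting to a boundary homeomorphism suffices for an s-cobordism), and it lets one extend a boundary homeomorphism over the upside-down $3$- and $4$-handles of $DT^*N_k$ to build that homotopy equivalence. Your proposed equivariant-intersection-form check is therefore unnecessary --- but more to the point, without asphericity no amount of matching classical invariants would bridge the gap to an s-cobordism.
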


Suppose that $X_0$ and $X_1$ are compact $4$-manifolds such that $\partial X_0$ is diffeomorphic 
to $\partial X_1$. Then $X_0$ and $X_1$ are said to be \emph{s-cobordant rel boundary} (cf. \cite[page 89]{fq}) if there exists a 
 compact $5$-manifold $Z$ whose boundary $\partial Z$ is diffeomorphic to the  union of $X_0$, $X_1$ and $\partial X_0 \times [0,1]$ such that for each $i=0, 1$, the inclusion $X_{i} \rightarrow Z$   is a simple homotopy equivalence.  It follows that 
if  $X_0$ and $X_1$ are s-cobordant rel boundary,  then they are simple homotopy equivalent. The reader can turn to \cite{c} for more on simple homotopy equivalences.  If $\pi_1(Z)$ is polycyclic, then according to \cite[7.1A Theorem]{fq},  $Z$ is homeomorphic to $X_0 \times [0,1]$, and in particular $X_0$ is homeomorphic to $X_1$.  

Since the fundamental group of the Klein bottle is polycyclic, we have the following corollary. 

\begin{corollary}
Any minimal weak symplectic filling of the canonical contact structure $\xi_{can}$ on the unit cotangent bundle $ST^*N_2$ of the Klein bottle $N_2$ is homeomorphic to the disk cotangent bundle $DT^*N_2$.
\end{corollary}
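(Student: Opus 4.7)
The plan is to combine Theorem \ref{thm: main} with the Freedman--Quinn theorem \cite[7.1A]{fq} cited in the introduction. By Theorem \ref{thm: main} applied to $k = 2$, any minimal weak symplectic filling $X$ of $(ST^*N_2, \xi_{can})$ is s-cobordant rel boundary to $DT^*N_2$, so there is a compact $5$-manifold $Z$ realizing the s-cobordism. Since the inclusion $DT^*N_2 \hookrightarrow Z$ is a simple homotopy equivalence, it induces an isomorphism on fundamental groups, giving $\pi_1(Z) \cong \pi_1(DT^*N_2)$.

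Next I would identify this fundamental group. The disk cotangent bundle $DT^*N_2$ is a $D^2$-bundle over the Klein bottle $N_2$, so $\pi_1(DT^*N_2) \cong \pi_1(N_2)$ is the Klein bottle group, which admits the presentation $\langle a, b \mid a b a b^{-1} \rangle$ and is isomorphic to the semidirect product $\mathbb{Z} \rtimes \mathbb{Z}$. This group fits into a short exact sequence $1 \to \mathbb{Z} \to \pi_1(N_2) \to \mathbb{Z} \to 1$, exhibiting a subnormal series with cyclic quotients; hence it is polycyclic.

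With $\pi_1(Z)$ polycyclic, \cite[7.1A Theorem]{fq} (as recalled in the paragraph preceding the corollary) applies to give a homeomorphism $Z \approx X \times [0,1]$. Restricting this homeomorphism to the boundary components identifies $X$ with $DT^*N_2$, which is the desired conclusion.

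There is no real obstacle here once Theorem \ref{thm: main} is in hand: the only verification needed is the (standard) polycyclicity of the Klein bottle group, after which the corollary is a direct invocation of the Freedman--Quinn s-cobordism theorem for manifolds with polycyclic (hence good) fundamental groups.
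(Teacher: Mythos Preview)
Your proposal is correct and follows exactly the approach the paper takes: apply Theorem~\ref{thm: main} for $k=2$, observe that $\pi_1(Z)\cong\pi_1(DT^*N_2)\cong\pi_1(N_2)$ is polycyclic, and then invoke \cite[7.1A Theorem]{fq} to conclude that the s-cobordism is a product. The paper gives this argument in the paragraph preceding the corollary together with the single sentence ``Since the fundamental group of the Klein bottle is polycyclic''; you have simply spelled out the details (the identification of $\pi_1(Z)$ via the simple homotopy equivalence and the explicit polycyclic series for the Klein bottle group).
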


The unit cotangent bundle $ST^*N_1$ of the real projective plane $N_1=\mathbb{RP}^2$ is diffeomorphic to the
        lens space  $L(4,1)$ and $\xi_{can}$
        is the  unique universally tight contact structure in $L(4,1)$, up to
        contactomorphism. McDuff
        \cite{mc} showed that $(L(4,1), \xi_{can})$ has two minimal symplectic fillings
        up to diffeomorphism: 

(i) The disk cotangent bundle $DT^*N_1$, which is a rational homology $4$-ball,
        and 

(ii) The disk bundle over the sphere with Euler number $-4$.

Both of these fillings are in fact Stein and clearly not homotopy equivalent (and hence not s-cobordant), since the latter is simply-connected while the former is not.

\subsection{Notation} If $\alpha$ is a simple closed curve on an oriented surface $\Sigma$, we denote the positive (a.k.a. right-handed) Dehn twist along $\alpha$ by $D(\alpha)$. We use $Map(\Sigma, \partial \Sigma)$ for the mapping class group of isotopy classes of 
diffeomorphism of the surface $\Sigma$  which are assumed to be equal to the identity at the boundary $\partial \Sigma$.  We use functional notation for the products in  $Map(\Sigma, \partial \Sigma)$, i.e., $D(\beta)D(\alpha)$ means that we first apply 
$D(\alpha)$. 

If $(X, \omega)$ is a symplectic $4$-manifold and we are only interested in its diffeomorphism type, then we suppress  
$\omega$ from the notation. Similarly, for a Stein surface $(W, J)$, we suppress the complex structure $J$ from the notation if it is irrelevant for the discussion.  The reader is advised to turn to \cite{ozst} for the background material that we will use  throughout the paper.

\section{Homology of the fillings} \label{sec: homology} Our goal in this section is to prove the 
following proposition. 

\begin{proposition} \label{prop: hom} Suppose that  
$W$ is a minimal weak symplectic filling of $(ST^{*}N_{k}, \xi_{can})$.  Then $$H_1(W; \mathbb{Z}) = H_1(DT^*N_k ; \mathbb{Z})=\mathbb{Z}^{k-1} 
\oplus \mathbb{Z}_2, $$ and  $$ H_2(W; \mathbb{Z}) = H_2(DT^*N_k ; \mathbb{Z})=0$$ provided that $k \geq 2$. 
\end{proposition}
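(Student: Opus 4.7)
The plan is to reduce to a Stein filling and then combine the long exact sequence of the pair $(W,\partial W)$ with the planar open book structure. By the Niederkr\"{u}ger--Wendl theorem cited in the introduction, any minimal weak filling of $\xi_{can}$ is symplectic deformation equivalent to a Stein filling, so we may assume $W$ is Stein. Then $W$ has the homotopy type of a 2-complex, whence $H_3(W;\Z)=H_4(W;\Z)=0$ and $H_2(W;\Z)$ is torsion-free. Lefschetz duality and the universal coefficient theorem give $H_1(W,\partial W;\Z)\cong H^3(W;\Z)=0$, and the long exact sequence of the pair collapses to
\[
0 \to H_2(\partial W;\Z) \to H_2(W;\Z) \to H_2(W,\partial W;\Z) \to H_1(\partial W;\Z) \to H_1(W;\Z) \to 0.
\]
A Gysin-sequence computation for the circle bundle $ST^*N_k\to N_k$, with coefficients twisted by the orientation character of the nonorientable base, yields $H_1(ST^*N_k;\Z)\cong\Z^{k-1}\oplus T$ with $|T|=4$ and $H_2(ST^*N_k;\Z)\cong\Z^{k-1}$, so in particular $b_1(W)\leq k-1$.

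Next, I would invoke Wendl's theorem to realize $W$ as an allowable Lefschetz fibration over $D^2$ whose regular fiber is the planar page $F$ of the open book for $\xi_{can}$ described in \cite{oo}, with a positive factorization of the monodromy $\phi$ providing vanishing cycles $c_1,\ldots,c_n\subset F$. This exhibits $W$ as homotopy equivalent to the $2$-complex $F\cup_{c_1}D^2\cup\cdots\cup_{c_n}D^2$, so
\[
H_1(W;\Z)=\mathrm{coker}\Bigl(\Z^n\xrightarrow{[c_i]}H_1(F;\Z)\Bigr),\qquad H_2(W;\Z)=\ker\Bigl(\Z^n\xrightarrow{[c_i]}H_1(F;\Z)\Bigr).
\]
A useful observation is that every Dehn twist acts trivially on $H_1(F;\Z)$ since $F$ is planar (the self-intersection form on $H_1(F;\Z)$ vanishes), so the multiset $\{[c_i]\}\subset H_1(F;\Z)$ is invariant under Hurwitz moves on the factorization. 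This would immediately yield the conclusion if every positive factorization of $\phi$ were Hurwitz-equivalent to the standard one coming from $DT^*N_k$, whose vanishing-cycle data is known to give $H_1=\Z^{k-1}\oplus\Z_2$ and $H_2=0$.

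The hard part is precisely this uniqueness issue: positive factorizations of a given mapping class in a planar mapping class group are not always Hurwitz equivalent --- for instance, the lantern relation in the pair-of-pants mapping class group produces positive factorizations whose vanishing cycles span genuinely different subgroups of $H_1(F;\Z)$ and yield topologically distinct fillings. I expect the resolution to rely on the specific form of the monodromy from \cite{oo} to show that the Euler characteristic $\chi(W)=\chi(F)+n$ equals $\chi(DT^*N_k)=2-k$ for every positive factorization, possibly combined with Etnyre's constraint $b_2^+(W)=0$ for planar contact structures. Once $\chi(W)=2-k$ is in hand, the bound $b_1(W)\leq k-1$ yields $b_2(W)=b_1(W)+1-k\leq 0$, so $b_2(W)=0$ and $b_1(W)=k-1$; the exact sequence then reduces to $0\to T_W\to\Z^{k-1}\oplus T\to\Z^{k-1}\oplus T_W\to 0$ where $T_W=\mathrm{Ext}(H_1(W;\Z),\Z)=\mathrm{torsion}(H_1(W;\Z))$, and comparing orders gives $|T_W|^2=|T|=4$, hence $T_W\cong\Z_2$, completing the computation.
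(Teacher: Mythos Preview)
Your approach is genuinely different from the paper's, and it has a gap at exactly the point you flag. The paper does not use the long exact sequence of $(W,\partial W)$ at all. Instead it proves directly (Lemma~\ref{lem: monocurve}) that for $k\geq 3$ every positive factorization of $\phi_k$ consists of exactly $k+2$ Dehn twists along curves $V'_0,\ldots,V'_{k+1}$ where each $V'_j$ encloses the \emph{same} set of boundary components as the standard curve $V_j$. The tool is the system of \emph{multiplicities} $M_j$ and \emph{joint multiplicities} $M_{i,j}$ of Plamenevskaya--Van Horn-Morris and Kaloti: these count how many monodromy curves in a positive factorization enclose a given boundary circle (respectively pair of circles), and they are invariants of the mapping class, computable from the known factorization. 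A short combinatorial case analysis then pins down, for each monodromy curve, precisely which holes it must enclose. Since on a planar surface the homology class of a simple closed curve is determined by the set of holes it encloses, the presentation matrix for $H_*(W)$ coming from the Lefschetz handlebody is literally the same as for $DT^*N_k$, and the homology follows at once. (For $k=2$ one extra configuration survives the multiplicity constraints and is checked by hand.)

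Your route --- fix $\chi(W)$, deduce $b_2=0$ and $b_1=k-1$, then squeeze out the torsion from $0\to H_2(W,\partial W)\to H_1(\partial W)\to H_1(W)\to 0$ via $H_2(W,\partial W)\cong\mathrm{Ext}(H_1(W),\Z)$ --- is sound \emph{once} $\chi(W)=2-k$ is known, but you have not established this; you only ``expect'' it. The paper's multiplicity argument yields it as a byproduct (every factorization has $k+2$ twists), but in fact Wand's theorem \cite{wa}, already cited in the introduction, shows that $\chi$ and $\sigma$ of Stein fillings of any planar contact $3$-manifold are determined by the contact structure, which would close your gap in one line without any monodromy analysis. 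Two smaller remarks: your displayed sequence begins ``$0\to H_2(\partial W)\to H_2(W)$'', but $H_3(W,\partial W)\cong H^1(W)$ is nonzero here, so that zero is misplaced; this is harmless since only the tail of the sequence is used once $H_2(W)=0$. And your observation about Hurwitz invariance of $\{[c_i]\}$ is correct but, as you note, insufficient on its own --- the multiplicity invariants are the right strengthening, since they are invariants of $\phi_k$ itself rather than of a Hurwitz class.
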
 

\begin{remark} Note that Proposition~\ref{prop: hom} does not hold for $k=1$,  since the contact $3$-manifold $(ST^{*}N_{1}=L(4,1), \xi_{can})$ has a \emph{simply-connected} Stein filling, namely the disk bundle over the sphere with Euler number $-4$, whose second homology group is $\mathbb{Z}$.\end{remark}

We rely on the following results to prove Proposition~\ref{prop: hom}.

\begin{lemma}[\cite{oo}] \label{lem: openbook}
Let $V_{0}, V_{1}, \dots, V_{k}, V_{k+1}$ be the simple closed curves shown in Figure \ref{fig: monod} on the planar 
surface $F_{k}$ with $2k+2$ boundary components, and let
$$\phi_{k}:= D(V_{0}) D(V_{1})  \cdots D(V_{k}) D(V_{k+1}) \in Map(F_k, \partial F_k).$$
Then, for all $k \geq 1$, the open book $(F_k,
\phi_{k})$ is adapted to $(ST^{*}N_{k}, \xi_{can})$.
\end{lemma}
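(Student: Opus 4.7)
The plan is to exhibit a positive allowable Lefschetz fibration (PALF) $\pi\colon DT^*N_k \to D^2$ whose regular fiber is the planar surface $F_k$ and whose vanishing cycles, read in order, are exactly the curves $V_0, V_1, \ldots, V_{k+1}$ of Figure \ref{fig: monod}. Once such a PALF is in hand, the restriction to the boundary produces an open book $(F_k, \phi_k)$ on $ST^*N_k = \partial DT^*N_k$ with monodromy equal to the product of positive Dehn twists along the vanishing cycles. By the Loi--Piergallini correspondence together with the Akbulut--Ozbagci construction, the open book induced on the boundary of a PALF whose total space is Stein is automatically adapted to the Stein-fillable contact structure on that boundary. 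Since the Stein structure we build is the standard one on $DT^*N_k$, this boundary contact structure is $\xi_{can}$, giving the lemma.

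To construct the PALF, I would realize $N_k$ as a disk with $k$ crosscaps attached, yielding a handle decomposition of $N_k$ with one $0$-handle, $k$ $1$-handles (one per Möbius band), and one $2$-handle. Thickening to the cotangent side produces a Weinstein handle decomposition of $DT^*N_k$ with the same handle indices, and after isotoping the Legendrian attaching circles onto a common page we obtain the desired PALF. The key nonorientable feature is that each crosscap $1$-handle of $N_k$ contributes a \emph{pair} of boundary components to the page (rather than a single boundary component, as it would in the orientable case), since any page must be orientable and the resulting double-cover-like cut sees the crosscap twice. Combined with the contributions of the $0$- and $2$-handles this accounts for all $2k+2$ boundary components of $F_k$. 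Tracing the Legendrian attaching circles through the page then yields the vanishing cycles: each $V_i$ for $1\le i\le k$ encircles the pair of holes coming from the $i$-th crosscap, while $V_0$ and $V_{k+1}$ arise from the $0$- and $2$-handle attachments.

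The main obstacle is the careful geometric bookkeeping that identifies the Legendrian attaching circles with the curves $V_i$ drawn on the page, up to Hurwitz moves, in the nonorientable setting. The arguments in the orientable case (e.g.\ those underlying the genus-$g$ open book for $ST^*\Sigma_g$) do not transfer verbatim because of this crosscap-doubling effect, and one must choose the Morse function on $N_k$ and the corresponding Liouville structure carefully to see the cycles in the claimed positions. A convenient independent sanity check is to compute $\chi$, $H_1$, and the signature of the total $4$-manifold presented by the PALF $(F_k, \phi_k)$ and match them against those of $DT^*N_k$; in the base case $k=1$ one further recovers a planar open book on $L(4,1)\cong ST^*\mathbb{RP}^2$ whose supported contact structure is the unique universally tight $\xi_{can}$, consistent with McDuff's classification recalled in the introduction. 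Inducting on $k$ by adjoining one crosscap at a time (equivalently, adding the pair of boundary components together with the Dehn twist $D(V_i)$ to the monodromy) then yields the statement for all $k\ge 1$.
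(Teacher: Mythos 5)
The paper itself does not prove Lemma~\ref{lem: openbook}: it is imported from \cite{oo}, and the only in-text commentary is Remark~\ref{rem: kir}, which records that one can check by Kirby calculus that the Lefschetz fibration determined by the factorization $\phi_k = D(V_0)D(V_1)\cdots D(V_{k+1})$ has total space $DT^*N_k$. Your plan is in the same spirit, but it stops short of a proof exactly where the content of the lemma lies. The step you defer as ``careful geometric bookkeeping''---producing a Lefschetz fibration compatible with the \emph{canonical} Weinstein structure on $DT^*N_k$ whose regular fiber is $F_k$ and whose vanishing cycles are the specific curves of Figure~\ref{fig: monod}---is the whole lemma. The Weinstein decomposition induced by a Morse function on $N_k$ has one $0$-handle, $k$ $1$-handles and a single $2$-handle, whereas the PALF you want corresponds to a decomposition with $2k+1$ $1$-handles and $k+2$ $2$-handles; converting one into the other (putting the Legendrian attaching circle on a page, stabilizing, and identifying the resulting page with $F_k$ and the attaching circles with the $V_j$) is precisely what has to be carried out, and your remark that each crosscap ``doubles'' into a pair of holes is a heuristic, not an argument.

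There are two further gaps. First, even granting a PALF whose total space is diffeomorphic to $DT^*N_k$, the lemma asserts that the supported boundary contact structure is $\xi_{can}$; for that you must know your Stein/Weinstein structure is the canonical one up to deformation, not merely some Stein structure on a manifold diffeomorphic to $DT^*N_k$. Your proposed sanity checks cannot close this: matching $\chi$, $H_1$ and the signature says nothing about the contact structure, and at $k=1$ the lens space $L(4,1)$ also carries a virtually overtwisted tight structure which is Stein fillable and supported by planar open books, so ``recovering a planar open book on $L(4,1)$'' does not single out the universally tight $\xi_{can}$ without an extra argument. Second, the closing induction is incorrect as stated: in passing from $F_k$ to $F_{k+1}$ the curves $V_0$ and $V_{k+1}$ themselves change (each must enclose the two new holes of the appropriate parity), so the inductive step is not ``add two boundary components and one Dehn twist,'' and no contact-geometric operation taking an open book adapted to $(ST^*N_k,\xi_{can})$ to one adapted to $(ST^*N_{k+1},\xi_{can})$ is exhibited. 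The soft ingredient you invoke---that the open book induced on the boundary of a PALF supports the contact structure filled by the Stein total space (cf. \cite{lp}, \cite{ao})---is fine, but it is the easy part.
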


\begin{figure}[htb]
\begin{overpic}
{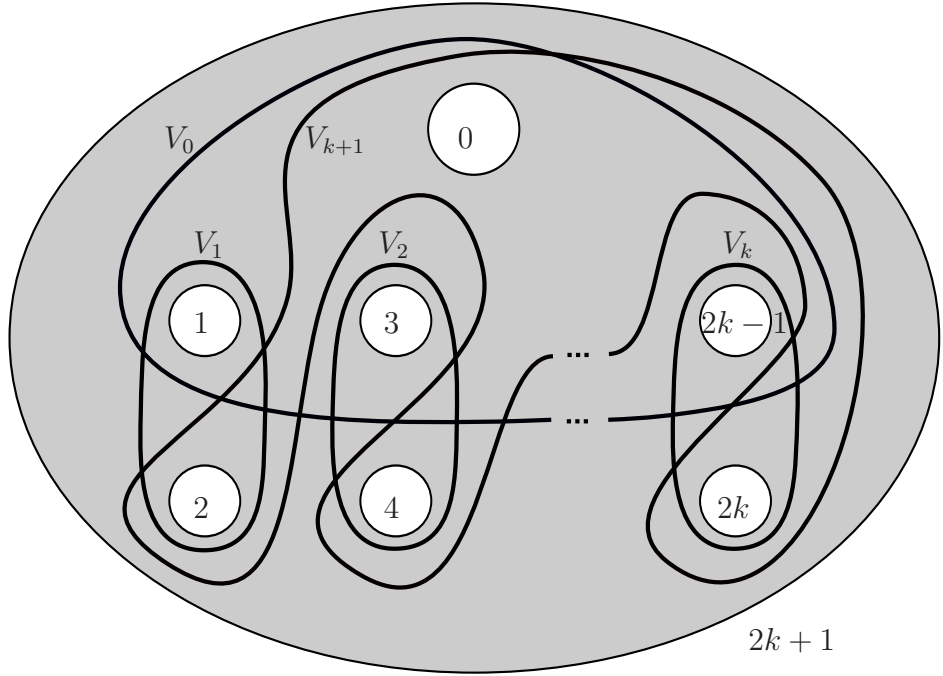}
\put(170, 200){$0$}
\put(70, 130){$1$}
\put(70, 60){$2$}
\put(142, 130){$3$}
\put(142, 60){$4$}
\put(262, 130){$2k-1$}
\put(268, 60){$2k$}
\put(280, 10){$2k+1$}
\put(59, 200){$V_{0}$}
\put(70, 160){$V_{1}$}
\put(140, 160){$V_{2}$}
\put(270, 160){$V_{k}$}
\put(112, 200){$V_{k+1}$}

\end{overpic}
\caption{The curves $V_{0}, V_{1}, \dots, V_{k}, V_{k+1}$ on the planar surface  $F_{k}$. Each boundary component of $F_k$ , denoted by 
$c_j$ in the text, for  $j=0, 1,  2, \ldots, 2k+1$, is labeled by $j$ in the figure. }
\label{fig: monod}
\end{figure}

\begin{remark} \label{rem: kir} For any $k \geq 1$, the disk cotangent bundle  $DT^*N_k$ is a Weinstein (and hence Stein) filling \cite[Example 11.12 (2)]{ce} of its boundary $(ST^{*}N_{k}, \xi_{can})$. To see this with another point of view, one can directly check that the total space of the Lefschetz fibration over the disk whose boundary has the induced open book decomposition $(F_k,
\phi_{k})$ given in Lemma~\ref{lem: openbook} is diffeomorphic to the disk cotangent bundle $DT^*N_k$ (cf. \cite[Appendix]{oo}). \end{remark} 

\begin{theorem}[Niederkr\"{u}ger and Wendl \cite{nw}] \label{thm: nw}
Every weak symplectic filling  $(X, \omega)$ of a contact 3-manifold $(Y, \xi)$ supported by
a planar open book decomposition, is symplectic deformation
equivalent to a blow-up of a Stein filling of $(Y, \xi)$.
\end{theorem}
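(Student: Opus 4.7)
The plan is to first reduce the weak filling $(X, \omega)$ to a strong filling via a cohomological collar deformation, then apply Wendl's structure theorem for strong symplectic fillings of planar contact $3$-manifolds to recognize $X$ as a positive Lefschetz fibration, and finally blow down the exceptional spheres to obtain a Stein filling.

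The first step is the weak-to-strong reduction. For a weak filling of a contact $3$-manifold supported by a planar open book, the goal is to modify $\omega$ in a collar of $\partial X$---without altering its symplectic deformation class on the interior of $X$---so as to produce a symplectic form $\omega'$ with $(X, \omega')$ a strong filling of $(Y, \xi)$. The planarity hypothesis is essential here: the genus-zero pages allow an effective promotion of the weak filling to a strong one via an explicit modification near the boundary, even when the cohomology class $[\omega|_Y]$ is nontrivial. This reduction is a technical heart of the argument and fails in general for open books of positive genus.

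The second step is the moduli-space construction. Choose an $\omega'$-compatible almost complex structure $J$ on $X$ adapted to the symplectization collar near $\partial X$, and consider the moduli space $\mathcal{M}$ of finite-energy punctured $J$-holomorphic spheres asymptotic to the binding Reeb orbits and representing the homology class of a capped-off page. Wendl's automatic transversality for embedded genus-zero curves in a symplectic $4$-manifold, combined with Gromov--SFT compactness, makes $\mathcal{M}$ into a smooth $2$-manifold with compactification $\overline{\mathcal{M}} \cong D^2$; the interior degenerations in $\overline{\mathcal{M}}$ are either nodal curves (producing Lefschetz-type singular fibers) or contain exceptional $(-1)$-spheres (producing blow-up loci), while $\partial \overline{\mathcal{M}}$ parameterizes the pages of the open book on $\partial X$. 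Positivity of intersections then realizes the natural projection $X \to \overline{\mathcal{M}} \cong D^2$ as a Lefschetz fibration whose regular fibers are the compactified pages.

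Blowing down every exceptional sphere in $\overline{\mathcal{M}}$ produces a minimal positive allowable Lefschetz fibration $\tilde{X} \to D^2$ whose boundary open book is the given planar one for $(Y, \xi)$. By the Loi--Piergallini theorem (refined by Akbulut--Ozbagci), $\tilde{X}$ carries a Stein structure with boundary contact structure $\xi$, and the symplectic form on $\tilde{X}$ arising from the blow-down of $\omega'$ is symplectic deformation equivalent to the Stein form. Reversing the blow-downs then exhibits $(X, \omega') \simeq (X, \omega)$ as a symplectic blow-up of the Stein filling $\tilde{X}$, proving the theorem. The principal obstacle is the first step, since weak-to-strong deformation is not possible for general weak fillings; the planarity hypothesis is precisely what enables it, and the remaining moduli-space and blow-down arguments are essentially Wendl's original strong-filling proof specialized to the planar setting.
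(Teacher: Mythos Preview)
The paper does not give a proof of this statement: it is quoted as a theorem of Niederkr\"{u}ger and Wendl \cite{nw}, with Remark~\ref{rem: wen} noting that Wendl \cite{we} had earlier established the strong-filling case, and it is used throughout only as a black box. There is therefore no proof in the paper against which to compare your proposal.

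As a sketch of the actual Niederkr\"{u}ger--Wendl argument your outline is broadly on target in steps two and three, but the first step is mischaracterized. They do not begin by deforming the weak filling to a strong one; indeed, the point of their paper is precisely that such a deformation is \emph{not} generally available (it would require $\omega|_{Y}$ to be exact), and planarity by itself does not force this. Instead they deform $\omega$ in a collar so that the boundary carries a \emph{stable Hamiltonian structure} adapted to the planar open book, which suffices to set up the finite-energy holomorphic-curve problem and run Wendl's moduli-space analysis directly in the weak setting. The planarity hypothesis is used exactly where Wendl used it---for automatic transversality and compactness of the genus-zero curves filling the pages---not as a mechanism to promote weak to strong. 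Your sentence that planarity ``allows an effective promotion of the weak filling to a strong one \ldots\ even when the cohomology class $[\omega|_Y]$ is nontrivial'' is therefore inaccurate as a description of the proof.
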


\begin{remark} \label{rem: wen} Note that Wendl  \cite{we}  had previously proved Theorem~\ref{thm: nw} for 
strong symplectic fillings.\end{remark}

In particular, in order to classify minimal weak symplectic  fillings of $(Y, \xi)$ supported by a planar open book decomposition, it suffices to study positive factorizations of the given monodromy of that open book decomposition. Note that every Stein filling admits an allowable Lefschetz fibration compatible with the given open book on the boundary (cf. \cite{lp}, \cite{ao}).

In the following, we will apply Theorem~\ref{thm: nw}  to the monodromy of the planar 
open book decomposition explicitly described in Lemma~\ref{lem: openbook}.  

\begin{lemma}\label{lem: monocurve}
Assume that $k\geq3$. Then any positive factorization of $\phi_k$ in Lemma~\ref{lem: openbook} consists of Dehn twists 
$D(V'_{0}),  D(V'_{1}),  \ldots,  D(V'_{k+1})$, where each curve $V'_{j}$ encloses the same holes as $V_j$, for $j =0, 1, 
 \ldots ,  k+1$. 
\end{lemma}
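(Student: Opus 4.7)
By combining Theorem \ref{thm: nw} with Lemma \ref{lem: openbook}, understanding minimal weak symplectic fillings of $(ST^{*}N_{k}, \xi_{can})$ amounts to classifying positive factorizations of $\phi_{k}$ in $Map(F_{k}, \partial F_{k})$. Since $F_{k}$ is planar, any essential simple closed curve $\alpha$ on $F_{k}$ is determined up to isotopy by the subset $S(\alpha) \subseteq \{c_{0}, c_{1}, \ldots, c_{2k+1}\}$ of boundary components it separates off. The content of the lemma is thus that, for any positive factorization $\phi_{k} = D(\alpha_{1}) \cdots D(\alpha_{N})$, the multiset of subsets $\{S(\alpha_{1}), \ldots, S(\alpha_{N})\}$ coincides with $\{S(V_{0}), S(V_{1}), \ldots, S(V_{k+1})\}$.

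My plan is to extract this information using abelianization-type invariants of $Map(F_{k}, \partial F_{k})$. For each pair of boundary components $c_{i}, c_{j}$ there is a homomorphism to $\mathbb{Z}$ that records, for any Dehn twist factor $D(\alpha)$, whether $c_{i}$ and $c_{j}$ lie together on the same side of $\alpha$; together with the counts of boundary-parallel twists, these data are additive across positive factorizations. Applied to $\phi_{k} = D(V_{0}) \cdots D(V_{k+1})$, they fix both the total number of Dehn twist factors (which must be $k+2$) and a system of linear constraints that the characteristic vectors of the subsets $S(\alpha_{i})$ must satisfy.

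The heart of the proof is the combinatorial step of showing that this linear system has a unique nonnegative integer solution, namely the one given by $\{S(V_{0}), \ldots, S(V_{k+1})\}$. Here the specific geometry of the $V_{j}$ plays a crucial role --- the disjoint ``pair'' structure of $S(V_{1}), \ldots, S(V_{k})$ together with the distinguished interactions of $V_{0}$ and $V_{k+1}$ with the boundary component $c_{0}$ --- and this is also where the hypothesis $k \geq 3$ becomes essential. For $k=1$ the analogous claim genuinely fails, since $(L(4,1), \xi_{can})$ admits the exotic Stein filling given by the disk bundle of Euler number $-4$, whose Lefschetz fibration yields a positive factorization of $\phi_{1}$ with curves enclosing different subsets.

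I expect this combinatorial step to be the main obstacle: ruling out spurious solutions to the linear system may require additional input beyond pairwise inclusion statistics, such as the right-veering property of the monodromy, a lantern-relation analysis, or an inductive ``capping-off'' argument that reduces the classification on $F_{k}$ to that on $F_{k-1}$ by filling in a chosen pair of boundary components. The role of the lower bound $k \geq 3$ is then to guarantee that, after one such reduction, one is still in a regime where the smaller instance has been established or can be verified directly.
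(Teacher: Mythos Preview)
Your proposal identifies the correct framework --- the multiplicity and joint-multiplicity invariants on a planar surface --- and this is exactly what the paper uses. But what you have written is a plan, not a proof: the entire content of the lemma lies in the combinatorial step you explicitly label ``the main obstacle'' and then leave undone. The paper carries this step out directly. One computes the list of multiplicities $M_j=2$ and joint multiplicities $M_{i,j}\in\{0,1\}$ from the given factorization, and then runs a short case analysis: first rule out boundary-parallel monodromy curves, then show that for each odd $i$ there must be a curve enclosing exactly $\{c_i,c_{i+1}\}$, and finally pin down the two remaining curves through $c_0$. Each step is a few lines of arithmetic with the $M_{i,j}$, and the hypothesis $k\geq 3$ enters precisely to guarantee that the vanishing joint multiplicities in item~(3) are nonvacuous at the moments one needs a contradiction. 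No right-veering input, lantern relations, or capping-off induction is required; the pairwise data alone suffice. Your suspicion that ``additional input beyond pairwise inclusion statistics'' might be needed is therefore unfounded, and until you actually execute the case analysis you have not proved anything.

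Two smaller points. First, your assertion that an essential simple closed curve on $F_k$ is determined \emph{up to isotopy} by $S(\alpha)$ is false: on a planar surface with at least four boundary components there are infinitely many isotopy classes realizing the same partition, differing by Dehn twists. Fortunately the lemma only asserts that the $V'_j$ enclose the same holes as the $V_j$, not that they are isotopic, so your restatement in terms of multisets of subsets is correct even though the sentence preceding it is not. Second, your claim that the invariants force the number of factors to be $k+2$ is not immediate from pairwise data alone (these only give linear relations, not a count); in the paper this comes out as a consequence of the full case analysis, once all curves have been identified.
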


\begin{proof} In order to study positive factorizations of the given monodromy of a planar open book, we make use of a technique due to Plamenevskaya and Van Horn-Morris \cite{pv} (see also \cite{ka}, \cite{kl}). Suppose that the monodromy $\phi_k$ in Lemma~\ref{lem: openbook}  has a factorization into positive Dehn twists along some curves. In the following we will refer to any such curve as a \emph{monodromy} curve. 

Recall that $c_j$'s are the boundary components  of the planar surface $F_k$ depicted in Figure~\ref{fig: monod}. For each $j\in\{0, 1, 2,$ $\ldots, 2k\}$, the  \emph{multiplicity} $M_j (\phi_k)$ is defined as the number of monodromy curves enclosing $c_j$,   and similarly, for each $i \neq j \in\{0, 1, 2,$ $\ldots, 2k\}$, the \emph{joint multiplicity} $M_{i, j} (\phi_k)$ is defined as the number of monodromy curves enclosing $c_{i}$ and $c_{j}$, in any positive factorization of $\phi_k$. The point is that these multipilicities are independent of the positive factorizations of $\phi_k$ (see  \cite{ka}, \cite{kl}).

It is easy to compute the multiplicities and joint multiplicities of $\phi_k$ using the positive factorization given in Lemma~\ref{lem: openbook}.

\begin{enumerate}
\item For any $j\in\{0, 1, 2,$ $\ldots, 2k\}$, $M_j=2$. 
\item For any $j\in\{1,  2,\ldots, 2k\}$, $M_{0, j}=1$. 
\item For any $h\in\{1, 3,$ $ \ldots, 2k-1\}$ and $l\in\{2, 4,$  $\ldots, 2k\}$, $M_{h,l}=0$ if $l\neq h+1$. 
\item For any $h\in\{1, 3,$ $ \ldots, 2k-1\}$, $M_{h,h+1}=1$. 
\item For any $h_1, h_2\in\{1, 3,$ $\ldots, 2k-1\}$, $M_{h_1, h_2}=1$.  
\item For any $l_1, l_2\in\{2, 4,$ $\ldots, 2k\}$, $M_{l_1, l_2}=1$.
\end{enumerate}

 We first claim that there is no boundary parallel monodromy curve in any positive factorization of $\phi_k$, a proof of which is spelled out in the next four paragraphs.

Suppose that there is a monodromy curve which is parallel to $c_0$. Then since $M_0=2$, by $(1)$,  there is another monodromy curve which encloses $c_0$ as well.  Since, by $(2)$, $M_{0, j}=1$ for any $j\in\{1, 2,  \ldots, 2k\}$, the second curve enclosing $c_0$ must enclose $c_j$ for all $j\in\{1, 2,  \ldots, 2k\}$. This contradicts to $(3)$ by taking $h=1$ and $l=4$.

Suppose that there is a monodromy curve which is parallel to $c_i$, for $i \in \{1, 3,$ $ \ldots, 2k-1\}$. Then there is another monodromy curve enclosing $c_i$, since by $(1)$,  $M_i=2$. But the second curve enclosing $c_i$ must enclose $c_{i+1}$ 
by $(4)$  and also  $c_j$ for all $j \in \{0, 1, 3,$ $ \ldots, 2k-1\}$, by $(2)$ and $(5)$. This contradicts to $(3)$, by taking $h=i$ and $l \in\{2, 4,$  $\ldots, 2k\} \setminus \{i+1\}$. 

Suppose that there is a monodromy curve which is parallel to $c_i$, for $i  \in\{2, 4,$ $\ldots, 2k\}$. Then there is another monodromy curve enclosing $c_i$, since by $(1)$,  $M_i=2$. But the second curve enclosing $c_i$ must enclose $c_{i-1}$ 
by $(4)$  and also  $c_j$ for all $j \in \{0, 2, 4,$ $\ldots, 2k\}$, by $(2)$ and $(6)$. This contradicts to $(3)$, by taking $l=i$ and $h \in \{1, 3,$  $\ldots, 2k-1\} \setminus \{i-1\}$. 

Finally, we observe that by $(3)$,  there is no monodromy curve which is parallel to the outer boundary component $c_{2k+1}$. We only need to assume that $k \geq 2$ so far in the proof, since $(3)$ is vacuous for $k=1$ (see Remark~\ref{rem: k2} (1)).

Next we note that, by $(4)$,  for any $i\in\{1, 3,$ $ \ldots, 2k-1\}$, there must be a monodromy curve in the factorization enclosing $c_i$ and $c_{i+1}$. We claim that this curve cannot enclose any other boundary components.  This monodromy curve cannot enclose $c_j$ for any $j\in\{1, 2,  \ldots, 2k\}\setminus\{i, i+1\}$, since by $(3)$,  $M_{i,j}=0$ for all $j 
 \in\{2, 4,$ $\ldots, 2k\} \setminus \{i+1\}$ and  $M_{i+1,j}=0$  for all $j 
 \in\{1, 3,$ $\ldots, 2k-1\} \setminus \{i\}$. Suppose that $c_0$ is enclosed by this monodromy curve. Then, by $(1)$ and $(2)$,  there must be  another monodromy curve enclosing $c_0$ and $c_j$ for all $j\in\{1,2,\ldots, 2k\}\setminus \{i, i+1\}$. 
This contradicts to $(3)$, provided that $k \geq 3$. For each $i\in\{1, 3,$ $ \ldots, 2k-1\}$, we denote by $V'_h$, the monodromy curve  enclosing only $c_i$ and $c_{i+1}$,    where $h = (i+1)/2$.  Note that, for $k=2$, there is another possibility of configuration of monodromy curves, as explained in Remark~\ref{rem: k2}  (2) below. 

By $(1)$ and $(2)$, there must be two more monodromy curves in the factorization, in addition to the ones described in the previous paragraph, both enclosing $c_0$. We describe these in the next two paragraphs. 

One of these, which we denote by $V'_0$,  must enclose $c_0$ and $c_1$ by $(2)$. We claim that $V'_0$ encloses $c_j$ if and only if  $j\in\{0, 1, 3, $  $\ldots, 2k-1\}$. By $(3)$, $V'_0$ cannot enclose $c_l$ for $l\in\{4, 6, $  $\ldots, 2k\}$. If we 
assume that it encloses $c_2$, then  it cannot enclose $c_j$ for  any $j \in \{3,5, $  $\ldots, 2k-1 \}$ by $(3)$, and therefore,   by $(1)$ and $(2)$, there is another monodromy curve enclosing $c_0$ and $c_j$ for all $j \in \{3,4,5, \ldots , 2k-1, 2k\}$, which again contradicts to $(3)$, provided that $k\geq 3$.  Suppose now that $V'_0$ does not enclose $c_h$ for some $h\in\{3, 5,$ $\ldots, 2k-1\}$. Then there must be another monodromy curve enclosing $c_0$, $c_h$ and $c_l$ for all $l\in \{2, 4,$  $\ldots, 2k\}$, by $(1)$ and $(2)$.  This contradicts to $(3)$ for $l \neq h+1$. 

The last monodromy curve,  which we denote by $V'_{k+1}$,   must enclose $c_0$ and $c_2$. By an argument similar to the above paragraph, $V'_{k+1}$ 
encloses $c_j$ if and only if  $j\in\{0, 2, 4,$  $\ldots, 2k\}$,   provided that $k\geq 3$.
\end{proof}

\begin{remark} \label{rem: k2} 
\begin{enumerate}
\item In order to rule out the existence of 
boundary  parallel monodromy curves in the factorization, we used $(3)$, which is vacuous for $k=1$.   As a matter of fact, $\phi_1$ has two positive factorizations,  $$D(V_{0}) D(V_{1}) D(V_{2}) =  D(c_{0}) D(c_{1}) D(c_{2}) D(c_{3}), $$ the latter consisting of only boundary parallel curves. This equality in the mapping class group is the well-known lantern relation. 
\item For $k=2$, there is another possibility of configuration of monodromy curves which we can not rule out by the above argument. Namely, the four monodromy curves in the factorization may enclose the following set of boundary curves $$\{c_0,c_1,c_2\}, \{c_0,c_3,c_4\}, \{c_1,c_3\}, \{c_2,c_4\},$$ respectively.  
\end{enumerate}

\end{remark}

\begin{proof}[Proof of Proposition~\ref{prop: hom}]
Each positive factorization of the monodromy $\phi_k$ in Lemma~\ref{lem: openbook} yields a Lefschetz fibration over the disk whose boundary has the induced open book decomposition $(F_k, \phi_k)$.  The regular fiber of this  Lefschetz fibration is $F_k$ and the vanishing cycles are exactly the monodromy curves in the given factorization. Therefore, one obtains a handlebody decomposition and the corresponding Kirby diagram of the total space of the fibration as follows. 

First of all, a neighborhood of the regular fiber $F_k$ is diffeomorphic to  $F_k \times D^2$ which is obtained by attaching $2k+1$ $1$-handles to the unique $0$-handle. In the corresponding Kirby diagram, one can visualize the fiber $F_k$  as depicted  in Figure~\ref{fig: monod},  whose orientation is induced from the standard orientation of $\mathbb{R}^2$. Note that for each $j\in\{0, 1, 2,\ldots, 2k\}$, the corresponding $1$-handle can be conveniently depicted as a dotted circle passing through the hole labeled by  $j$, and linking once the outer boundary component labeled by $2k+1$.  In addition,  a $2$-handle with framing $-1$ is attached along each monodromy curve in the factorization, which can be visualized on the fiber $F_k$. 

By Theorem~\ref{thm: nw}, any \emph{minimal} weak symplectic filling $W$  of $(ST^{*}N_{k}, \xi_{can})$ is diffeomorphic to a Lefschetz fibration over the disk which corresponds to some positive factorization of $\phi_k$. Moreover, by Lemma~\ref{lem: monocurve} and Remark~\ref{rem: k2} (2), each positive factorization of $\phi_k$ has $k+2$ Dehn twists. Thus, the total space of the correponding Lefschetz fibration has a handle decomposition consisting of a $0$-handle, $2k+1$ $1$-handles and $k+2$ $2$-handles.

Suppose $k \geq 3$. Then by Lemma~\ref{lem: monocurve},  for any $j \in\{0, 1, 2,\ldots, k+1\}$, the monodromy curve $V'_{j}$ which appears in a factorization of $\phi_k$ encloses  the same holes as $V_j$ on the planar surface $F_k$. It follows that the linking number of the circle  $V'_j$ with any dotted circle in the corresponding Kirby diagram,  is the same as the linking number of $V_j$ with that dotted circle.    Hence we deduce (see, for example,  \cite[page 42]{ozst})  that the homology groups of the total space of the Lefschetz fibration is independent of the positive factorization of $\phi_k$. As a consequence,  since we already know one positive factorization of $\phi_k$ which gives a Lefschetz fibration on the disk cotangent bundle  $DT^*N_k$ (see Remark~\ref{rem: kir}),  we conclude that
$$H_1(W; \mathbb{Z}) = H_1(DT^*N_k ; \mathbb{Z})= H_1 (N_k ; \mathbb{Z}) = \mathbb{Z}^{k-1} \oplus \mathbb{Z}_2,$$ and 
$$ H_2(W; \mathbb{Z}) = H_2(DT^*N_k ; \mathbb{Z})= H_2 (N_k ; \mathbb{Z})=0.$$  

For $k=2$, there are two possible configurations of monodromy curves, the standard one as above and another one as
described in Remark~\ref{rem: k2} (2). For the standard one, the proof above is valid. For the other one, one can simply check that the homology groups are the same as in the standard one.  
\end{proof}

\section{Homotopy type of the fillings}

Our goal in this section is to prove Theorem~\ref{thm: main} that we stated in the introduction. We begin with some basic observations.

The unit cotangent bundle $ST^{*}N_{k}$ is a circle bundle over the nonorientable surface 
$N_k$ whose Euler number is equal to $-\chi(N_k)=k-2$. The fundamental group of $ST^{*}N_{k}$ is given (cf. \cite[page 91]{j}) as follows:  
$$\pi_1(ST^{*}N_{k})=<a_1,\ldots, a_k, t \; | \; a_{j}ta_{j}^{-1}=t^{-1}, \prod\limits_{j=1}^{k} a_{j}^{2}=t^{k-2}>.$$
Here $t$ represents the homotopy class of the circle fiber and it generates a cyclic normal subgroup of $\pi_1(ST^{*}N_{k})$.   After abelianization,  we get
$$H_1(ST^{*}N_{k}; \mathbb{Z})=
\begin{cases}
\mathbb{Z}^{k-1}\oplus  \mathbb{Z}_{2}\oplus  \mathbb{Z}_{2}, & \text{if} \; k\text{ is even,}\\
\mathbb{Z}^{k-1}\oplus \mathbb{Z}_{4}, &\text{if} \; k\text{ is odd.}
\end{cases}
$$

\begin{proposition} \label{prop: asph} Any Stein  filling of  $(ST^{*}N_{k}, \xi_{can})$ is aspherical, provided that $k\geq 2$.  
\end{proposition}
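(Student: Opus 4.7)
The idea is to show that any Stein filling $W$ of $(ST^*N_k,\xi_{can})$ is homotopy equivalent to $N_k$; since $N_k$ has infinite fundamental group with universal cover $\R^2$ for $k\ge 2$, it is a $K(\pi_1(N_k),1)$, and asphericity of $W$ follows immediately.

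A Stein $4$-manifold admits a handle decomposition with handles of index at most $2$, so $W$ has the homotopy type of a $2$-complex. By Loi--Piergallini and Akbulut--Ozbagci, $W$ may be realized as an allowable Lefschetz fibration over $D^2$ whose induced open book on $\partial W$ is $(F_k,\phi_k)$, and whose vanishing cycles form a positive factorization of $\phi_k$. This endows $W$ with the structure of a $2$-complex having one $0$-cell, $2k+1$ $1$-cells $x_0,\ldots,x_{2k}$ (one for each hole $c_0,\ldots,c_{2k}$ of $F_k$), and $k+2$ $2$-cells attached along the vanishing cycles. The attaching word of the $2$-cell corresponding to a vanishing cycle enclosing $\{c_{j_1},\ldots,c_{j_m}\}$ is $x_{j_1}\cdots x_{j_m}$, in the cyclic order inherited from the planar picture.

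By Lemma~\ref{lem: monocurve} and Remark~\ref{rem: k2}(2), the vanishing cycles are explicit simple closed curves enclosing specified holes. I will argue that in each case the $2$-complex simplifies via Tietze moves (equivalently, $1/2$-handle cancellations) to the standard $2$-complex of $N_k$. For the factorization of Lemma~\ref{lem: openbook} (available for all $k\ge 2$), each pair cycle $V'_j$ with $1\le j\le k$ encloses only $\{c_{2j-1},c_{2j}\}$ and so gives a relation $x_{2j-1}x_{2j}=1$, allowing the elimination of $x_{2j}$. After these $k$ eliminations, the relation from $V'_0$, which involves $x_0$ and each odd-indexed $x_j$ once, lets us eliminate $x_0$ as well. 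The residual $2$-complex has $k$ $1$-cells and a single $2$-cell from $V'_{k+1}$, whose attaching word -- after the substitutions $x_{2j}=x_{2j-1}^{-1}$ and $x_0=(x_{2k-1}\cdots x_1)^{-1}$ -- is conjugate to $\prod_{j=1}^{k} x_{2j-1}^{\pm 2}$, exactly the defining relator of $\pi_1(N_k)$. Hence $W\simeq N_k$. For the extra factorization at $k=2$ described in Remark~\ref{rem: k2}(2), the pair cycles $\{c_1,c_3\}$ and $\{c_2,c_4\}$ eliminate $x_3,x_4$; the triple cycle $\{c_0,c_1,c_2\}$ then eliminates $x_0$; and the remaining relation from $\{c_0,c_3,c_4\}$ reduces up to conjugation to $x_1^2x_2^2$, again producing the presentation complex of $\pi_1(N_2)$.

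The main technical obstacle will be the bookkeeping of cyclic orderings and signs in the attaching words when tracking them through the successive substitutions; this is the only delicate step, and reduces to a direct inspection of the planar diagram together with the conventions fixing the $-1$ framings and orientations of the Lefschetz fibration. Once the reductions yield the claimed relators up to conjugation, the homotopy equivalence $W\simeq N_k$ and hence the asphericity of $W$ follow at once.
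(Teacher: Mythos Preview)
There is a genuine gap in your argument. Lemma~\ref{lem: monocurve} determines only \emph{which holes} each monodromy curve $V'_j$ encloses; it does \emph{not} determine the isotopy class of $V'_j$ in $F_k$. On a planar surface there are infinitely many isotopy classes of simple closed curves enclosing a prescribed subset of boundary components (apply, for instance, any Dehn twist along a curve meeting the given one), and the attaching word of the corresponding $2$-cell --- a conjugacy class in the free group $\pi_1(F_k)$ --- depends on that isotopy class, not merely on the set of enclosed holes. Thus your assertion that a cycle enclosing $\{c_{2j-1},c_{2j}\}$ ``gives a relation $x_{2j-1}x_{2j}=1$'' is unjustified: in general the relation is $u\,x_{2j-1}\,u^{-1}\,v\,x_{2j}\,v^{-1}=1$ for some words $u,v$ in all the generators, and after eliminating $x_{2j}$ the substituted expression involves these unknown words. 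The final relator you obtain need not be conjugate to $\prod x_{2j-1}^{\pm 2}$, and there is no way to ``inspect the planar diagram'' because Lemma~\ref{lem: monocurve} gives you no diagram to inspect. This is exactly why Proposition~\ref{prop: hom} in the paper computes only homology (linking numbers with the dotted circles depend only on which holes are enclosed), and why asphericity and the fundamental group are handled separately by other methods.

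The paper's proof bypasses this difficulty entirely: using only the homology computation of Proposition~\ref{prop: hom}, it constructs a homomorphism $\pi_1(W)\to\mathbb{Z}_2$ whose associated double cover $W'$ is a Stein filling of $(ST^*\Sigma_{k-1},\xi_{can})$, and then invokes the asphericity result of Sivek and Van Horn-Morris for the orientable case. Note that your proposed argument, if it worked, would establish $W\simeq N_k$ directly and hence render both Proposition~\ref{prop: asph} and Proposition~\ref{prop: fund} immediate; the fact that the paper proves these two propositions by independent (and more circuitous) arguments is a signal that the isotopy classes of the $V'_j$ are genuinely not controlled by Lemma~\ref{lem: monocurve}.
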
 

\begin{proof} Suppose that  
$W$ is a Stein filling of $(ST^{*}N_{k}, \xi_{can})$. 
Then, since $W$ has a handlebody consisting of  handles of index at most two,  the  inclusion map $$i: ST^{*}N_{k} = \partial W \rightarrow W$$
 induces a surjective homomorphism $$i_* : \pi_1(ST^{*}N_{k}) \rightarrow \pi_1(W)$$ and hence we obtain the following commutative diagram,
$$\xymatrix{
    \pi_1(ST^{*}N_{k}) \ar[rr]^{i_{*}}\ar[d]_{f_1} & & \pi_1(W) \ar[d]^{f_2} \\
     H_1(ST^{*}N_{k}; \mathbb{Z})\ar[rr]^{i_*}&  & H_1(W; \mathbb{Z})
    }$$
where $f_1$ and $f_2$ are Hurewicz maps.

Let $\alpha_{j}=i_{*}(a_j)$, for $j=1,\ldots, k$, and let $\tau=i_{*}(t)$. Then $f_{2}(\alpha_{j})$, $j=1,\ldots, k$, and $f_{2}(\tau)$ generate $H_1(W; \mathbb{Z})$. If $k$ is even, then both $\sum\limits_{j=1}^{k}f_{1}(a_j)$ and $f_{1}(t)$ are torsion of order $2$ in $H_1(ST^{*}N_{k}; \mathbb{Z})$. If $k$ is odd, then $\sum\limits_{j=1}^{k}f_{1}(a_j)$ is torsion of order 4 and $f_{1}(t)=2\sum\limits_{j=1}^{k}f_{1}(a_j)$  in $H_1(ST^{*}N_{k}; \mathbb{Z})$.  We know that  $H_1(W; \mathbb{Z})=\mathbb{Z}^{k-1} \oplus \mathbb{Z}_2$, by Proposition~\ref{prop: hom}. Thus $\sum\limits_{j=1}^{k}f_{2}(\alpha_j)\in H_1(W; \mathbb{Z})$  is either trivial or   torsion of order $2$ if $k$ is even,  and is torsion of order $2$ if $k$ is odd.  Moreover,  $f_{2}(\alpha_j)$ is non-torsion in $H_1(W; \mathbb{Z})$  for $j=1,\ldots, k$.  Therefore, we can define a surjective homomorphism 
$\psi : H_1(W; \mathbb{Z})\rightarrow \mathbb{Z}_{2}$ such that $\psi(f_{2}(\alpha_j))=1$ for all $j =1, \ldots, k$ and $\psi(f_{2}(\tau))=0$.

Now consider the  double covers of $ST^{*}N_{k}$ and $W$ induced by  
the surjective homomorphisms $$\psi\circ f_2\circ i_{*}:\pi_1(ST^{*}N_{k})\rightarrow \mathbb{Z}_2 \:\; \text{and} \:\; \psi\circ f_2:\pi_1(W)\rightarrow \mathbb{Z}_2, $$ respectively. Since each generator $a_j$, for $j=1,\ldots, k$,  is mapped to $1\in \mathbb{Z}_2$, and $t$ is mapped to $0\in \mathbb{Z}_2$, the double cover of $ST^{*}N_{k}$ is the unit cotangent bundle $ST^{*}\Sigma_{k-1}$, where  $\Sigma_{k-1}$ is  the closed orientable surface  which is the double cover of the nonorientable surface $N_k$.

By \cite{g1}, the canonical contact structure on $ST^{*}N_{k}$ lifts to the canonical contact structure on $ST^{*}\Sigma_{k-1}$ under this double cover.  On the other hand, since any finite cover of a Stein surface is Stein (cf. \cite[page 436]{gs}), the corresponding double cover $W'$ of $W$  is a Stein filling of $(ST^{*}\Sigma_{k-1}, \xi_{can})$. By \cite[Proposition 4.9]{sm}, $W'$ is aspherical, i.e., $\pi_n (W')=0$ 
for all $n\geq 2$.   Hence $W$ is also aspherical, since it is well-known (see, for example, \cite[Proposition 4.1]{h}) that higher homotopy groups are preserved under coverings.  
\end{proof}

Next we compute the fundamental group of the  fillings.

\begin{proposition} \label{prop: fund}  If  $W$ is a Stein filling of  $(ST^{*}N_{k}, \xi_{can})$, then $\pi_1(W)=\pi_1 (N_k)$, provided that $k \geq 2$. 
\end{proposition}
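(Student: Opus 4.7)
Since $W$ is a Stein filling, the map $i_*:\pi_1(ST^*N_k)\to\pi_1(W)$ is surjective. Combined with the presentation of $\pi_1(ST^*N_k)$ recalled above, it suffices to establish (i) that the fiber class $t$ lies in $\ker i_*$, and (ii) that the resulting induced surjection $\phi:\pi_1(N_k)\to\pi_1(W)$ is injective. My plan is to prove (i) by passing to the orientation double cover $W'\to W$ constructed in the proof of Proposition~\ref{prop: asph}, and (ii) via the Hopfian property of surface groups together with torsion-freeness of $\pi_1(N_k)$.

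For (i), recall that $W'$ is a Stein filling of $(ST^*\Sigma_{k-1},\xi_{can})$. For $k\geq 3$, the main theorem of Sivek and Van Horn--Morris \cite{sm} identifies $W'$ as s-cobordant rel boundary to $DT^*\Sigma_{k-1}$, while for $k=2$ the corresponding identification follows from \cite{s} and \cite{we}. In either case $\pi_1(W')\cong\pi_1(\Sigma_{k-1})$, and because the equivalence preserves boundaries, the induced map $\pi_1(ST^*\Sigma_{k-1})\to\pi_1(W')$ is the standard surjection that kills the fiber class $t'\in\pi_1(ST^*\Sigma_{k-1})$. Since the covering $ST^*\Sigma_{k-1}\to ST^*N_k$ has degree one on fibers (consistent with the Euler numbers $2(k-2)$ and $k-2$), the subgroup inclusion $\pi_1(ST^*\Sigma_{k-1})\hookrightarrow\pi_1(ST^*N_k)$ sends $t'$ to $t$. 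Commutativity of the square
\[
\xymatrix{
\pi_1(ST^*\Sigma_{k-1})\ar[r]\ar[d] & \pi_1(W')\ar[d]\\
\pi_1(ST^*N_k)\ar[r]^{i_*} & \pi_1(W)
}
\]
(the vertical arrows being the subgroup inclusions induced by the double covers) then forces $i_*(t)=1$ in $\pi_1(W)$.

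For (ii), I would restrict $\phi$ to the index-two orientation subgroup $\pi_1(\Sigma_{k-1})\subset\pi_1(N_k)$. The character $\chi=\psi\circ f_2:\pi_1(W)\to\mathbb{Z}_2$ built in Proposition~\ref{prop: asph} satisfies $\chi\circ\phi$ equal to the orientation character on $\pi_1(N_k)$ (both send each generator $a_j$ to $1$), so $\phi$ carries $\pi_1(\Sigma_{k-1})$ into $\pi_1(W')=\ker\chi$. Chasing the square above modulo $t'$ on top and $t$ on bottom shows that $\phi|_{\pi_1(\Sigma_{k-1})}:\pi_1(\Sigma_{k-1})\to\pi_1(W')\cong\pi_1(\Sigma_{k-1})$ is a surjective endomorphism of a Hopfian group (closed surface groups being residually finite, hence Hopfian), and therefore an isomorphism. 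Consequently $\ker\phi$ has trivial intersection with $\pi_1(\Sigma_{k-1})$ and injects into $\pi_1(N_k)/\pi_1(\Sigma_{k-1})\cong\mathbb{Z}_2$; but $\pi_1(N_k)$ is torsion-free for $k\geq 2$ (its universal cover is $\mathbb{R}^2$), so $\ker\phi=1$ and $\phi$ is an isomorphism.

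The main obstacle is verifying that the equivalence supplied by \cite{sm}, \cite{s}, \cite{we} really induces the standard fiber-killing surjection $\pi_1(ST^*\Sigma_{k-1})\to\pi_1(W')$, rather than some other surjection onto $\pi_1(\Sigma_{k-1})$. This hinges on the rel-boundary nature of the s-cobordism (or symplectic deformation equivalence), which guarantees that the two boundary inclusions of $ST^*\Sigma_{k-1}$ into $W'$ and $DT^*\Sigma_{k-1}$ induce the same map on $\pi_1$ after the canonical identifications; once this is in hand, the rest of the argument is formal.
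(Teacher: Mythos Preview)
Your argument is correct (with the caveat you already flag) but follows a genuinely different route from the paper's.

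The paper does \emph{not} kill $\tau=i_*(t)$ at the outset. Instead it first passes to the quotient: using the double cover $W'$ and the intermediate results \cite[Propositions~4.7 and 4.8]{sm}, it shows the induced map $p:\pi_1(N_k)\to\pi_1(W)/\langle\tau\rangle$ is an isomorphism, giving a short exact sequence $1\to\mathbb{Z}_m\to\pi_1(W)\to\pi_1(N_k)\to 1$. Only then does it prove $m=1$, by combining asphericity of $W$ (Proposition~\ref{prop: asph}) with $H_2(W;\mathbb{Z})=0$ (Proposition~\ref{prop: hom}) to get $H_2(\pi_1(W);\mathbb{Z})=0$, and computing the latter via the Lyndon--Hochschild--Serre spectral sequence for the extension.

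You reverse the logic: you invoke the \emph{full} main theorems of \cite{sm} (and \cite{s}, \cite{we} for $k=2$) to pin down $\pi_1(W')\cong\pi_1(\Sigma_{k-1})$ together with the boundary map, conclude $\tau'=1$ and hence $\tau=1$ directly, and then prove injectivity of $\phi$ via the Hopfian property of surface groups and torsion-freeness of $\pi_1(N_k)$. This bypasses the spectral sequence entirely and makes no use of asphericity or of the $H_2$ computation from Section~\ref{sec: homology}. The price is that you need the classification results in their rel-boundary form rather than just the intermediate $\pi_1$ statements the paper cites; for $k\geq 3$ the s-cobordism in \cite{sm} is explicitly rel boundary, and for $k=2$ Wendl's construction extends a fixed boundary open book to a Lefschetz fibration, so the identification does respect the fiber class as you need.
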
 

\begin{proof}
Suppose that $W$ is a Stein 
filling of  $(ST^{*}N_{k}, \xi_{can})$. Since $\langle t \rangle $ is a normal subgroup in $\pi_{1}(ST^{*}N_{k})$,  its image $\langle \tau \rangle $ under the surjective homomorphism $i_{*}:\pi_{1}(ST^{*}N_{k})\rightarrow \pi_{1}(W)$ is a 
normal subgroup of $\pi_{1}(W)$.  Note that $i_*$  induces a surjective homomorphism 
$$p:  \pi_1 (N_k) \rightarrow \pi_{1}(W)/ \langle \tau \rangle.$$  

Let  $W'$ be the double cover of $W$ as in the proof of Proposition~\ref{prop: asph}. Then the surjective homomorphism 
$i_{*}:\pi_{1}(ST^{*}\Sigma_{k-1})\rightarrow \pi_{1}(W')$ induces a surjective homomorphism 
$$p':\pi_{1}(\Sigma_{k-1})\rightarrow \pi_{1}(W')/\langle \tau' \rangle , $$ where $\tau'= i_{*}(t')$, and $t'$ represents the homotopy class of the circle fiber of $ST^{*}\Sigma_{k-1}$.

Using the same argument as in the proof of \cite[Proposition 4.7]{sm}, we have $\ker(p)=\ker(p')$, where  
$\pi_{1}(\Sigma_{k-1})$ is identified as a subgroup of $\pi_1 (N_k)$. Now, since $W'$ is a Stein filling of $(ST^{*}\Sigma_{k-1}, \xi_{can})$, we conclude that the surjective homomorphism $p'$ above is also injective by \cite[Proposition 4.8]{sm}. Hence we get  $\ker(p)=\ker(p')=0$.  As a consequence,  we have a short exact sequence $$ 1\rightarrow \langle\tau\rangle\rightarrow \pi_1(W)\rightarrow \pi_1 (N_k)\rightarrow 1.$$

Since $\langle \tau \rangle$ is a cyclic subgroup of $\pi_{1}(W)$,  isomorphic to $\mathbb{Z}_{m}$ for 
some non-negative integer $m$, where $\mathbb{Z}_{0} = \mathbb{Z}$, it follows that the short exact sequence above can be expressed as  
$$1\rightarrow \mathbb{Z}_{m}\rightarrow \pi_1(W)\rightarrow \pi_1 (N_k)\rightarrow 1.$$ Our goal is to show that $m=1$. In order to achieve our goal, we first observe that  $H_{2}(\pi_{1}(W); \mathbb{Z})=H_{2}(W; \mathbb{Z})$, since $W$ is aspherical by Proposition~\ref{prop: asph}. But since we have $H_{2}(W; \mathbb{Z})= 0$ by  Proposition~\ref{prop: hom}, we conclude that  $H_{2}(\pi_{1}(W); \mathbb{Z})=0$. 

Next we use the Lyndon/Hochschild-Serre spectral sequence (see, for example,  \cite{b})  
$$E_{p,q}^{2}=H_{p}(\pi_{1}(N_k); H_{q}(\mathbb{Z}_m; \mathbb{Z}))\Longrightarrow H_{p+q}(\pi_{1}(W); \mathbb{Z})$$ to compute   $H_{2}(\pi_{1}(W); \mathbb{Z})$, which a priori may depend on $m$. 

Based on the facts that $H_{0}(\mathbb{Z}_{m}; \mathbb{Z})= \mathbb{Z}$, $H_{1}(\mathbb{Z}_{m}; \mathbb{Z})=\mathbb{Z}_{m}$, 
  $H_{2}(\mathbb{Z}_{m}; \mathbb{Z})=0$, for all $ m \geq 0$, and that the nonorientable surface $N_k$ is aspherical for $k \geq 2$, we obtain 
$$E_{0,2}^{2}=H_{0}(\pi_{1}(N_k); H_{2}(\mathbb{Z}_{m}; \mathbb{Z}))=H_{0}(N_k; 0)=0,$$

$$E_{2,0}^{2}=H_{2}(\pi_{1}(N_k); H_{0}(\mathbb{Z}_{m}; \mathbb{Z}))=H_{2}(N_k; \mathbb{Z})=0,$$ and

$$E_{1,1}^{2}=H_{1}(\pi_{1}(N_k); H_{1}(\mathbb{Z}_{m}; \mathbb{Z}))=H_{1}(N_k; \mathbb{Z}_{m})=\begin{cases}
(\mathbb{Z}_{m})^{k-1}\oplus  \mathbb{Z}_{2}, & m\text{ is even,}\\
(\mathbb{Z}_{m})^{k-1}, & m\text{ is odd.}
\end{cases}$$

Since the cohomological dimension of the surface group $\pi_{1}(N_k)$  is equal to $2$ for $k >1$ (cf. \cite[Page 185]{b}), $E^{2}$ page is supported by  $ p\in\{0,1, 2\}$.  It follows that  

$$E_{0,2}^{\infty}=E_{0,2}^{2}, \; E_{1,1}^{\infty}=E_{1,1}^{2}, \; \mbox{and} \; E_{2,0}^{\infty}=E_{2,0}^{2}.$$

Hence we have  $$H_{2}(\pi_{1}(W); \mathbb{Z})=E_{0,2}^{\infty}\oplus E_{1,1}^{\infty}\oplus E_{2,0}^{\infty}=\begin{cases}
(\mathbb{Z}_{m})^{k-1}\oplus  \mathbb{Z}_{2}, & m\text{ is even,}\\
(\mathbb{Z}_{m})^{k-1}, & m\text{ is odd.}
\end{cases}$$  Therefore $m=1$, since $H_{2}(\pi_{1}(W); \mathbb{Z})=0$, and thus $\pi_{1}(W)=\pi_{1}(N_k)$. 
\end{proof}

We are now ready to prove our main result which is Theorem~\ref{thm: main}.

\begin{proof}[Proof of Theorem~\ref{thm: main}] Suppose that $k \geq 2$ and $W$ is a 
Stein filling of the  contact $3$-manifold  $(ST^{*}N_{k}, \xi_{can})$. We will show that $W$ is 
s-cobordant rel boundary to the disk cotangent bundle $DT^*N_k$, using the same argument as in the proof of \cite[Theorem 4.10]{sm}. For the sake of completeness, we outline the argument here.  We showed that $W$ is aspherical by Proposition~\ref{prop: asph},  and 
$\pi_1 (W) = \pi_1(N_k)$, a surface group, by Proposition~\ref{prop: fund}.  
According to   \cite[Corollary 1.23]{k},  such a compact manifold $W$ is topologically s-rigid. This condition implies 
that it suffices to find a homotopy equivalence 
$\rho :  DT^*N_k  \to W$ which restricts to a homeomorphism $ ST^*N_k  \to \partial W$ in order to prove that  $DT^*N_k$ is s-cobordant to $W$. 

Now consider  the standard handlebody decomposition of $DT^*N_k$ consisting of a $0$-handle, $k$ $1$-handles and a $2$-handle. By turning it upside down, we can construct $DT^*N_k$ by attaching a $2$-handle, $k$ $3$-handles and a $4$-handle to a 
thickened $ST^*N_k$. Next we define  a homeomorphism $\rho : ST^*N_k \to \partial W$ sending a circle fiber to a circle fiber. Note that the attaching curve of the (upside down) $2$-handle is a circle fiber in $\partial W$ which is nullhomotopic in $W$. Therefore,  $\rho$ extends over the $2$-handle of $DT^*N_k$. Moreover, $\rho$ extends over the handles  of index greater than $2$, since $W$ is aspherical. It follows that $\rho :  DT^*N_k  \to W$ is a homotopy equivalence by Whitehead's theroem.  

Finally, we use Theorem~\ref{thm: nw} to finish the proof for an arbitrary minimal weak symplectic filling of $(ST^{*}N_{k}, \xi_{can})$.
 \end{proof} 

\noindent{\bf{Acknowledgement:}} This work was mainly carried out at the Department of Mathematics at UCLA.  
 The authors would like to express their gratitude to Ko Honda for his hospitality.

\end{document}